\newcommand{\la}{\langle}
\newcommand{\ra}{\rangle}
\newcommand{\x}{\times}
\newcommand{\ox}{\otimes}
\newcommand{\ZZ}{\mathbb{Z}}
\newcommand{\CC}{\mathbb{C}}
\newcommand{\bk}{\mathbb{K}}
\newcommand{\CP}{\mathbb{C}P}
\newcommand{\RR}{\mathbb{R}}
\newcommand{\TT}{\mathbb{T}}
\newcommand{\QQ}{\mathbb{Q}}
\newcommand{\s}{\sigma}
\DeclareMathOperator{\Hom}{Hom}
\DeclareMathOperator{\Spec}{Spec}
\newcommand{\cO}{\mathcal O}
\newcommand{\PP}{\mathbb P}
\numberwithin{equation}{section}
\newtheorem{proposition}{Proposition}[section]
\newtheorem{theorem}[proposition]{Theorem}
\theoremstyle{definition}
\newtheorem{example}[proposition]{Example}
\newtheorem{remark}[proposition]{Remark}
\title[Rationally elliptic toric varieties]{Rationally elliptic toric varieties}
\author[I. Biswas]{Indranil Biswas}
\address{School of Mathematics, Tata Institute of Fundamental
Research, Homi Bhabha Road, Mumbai 400005, India}
\email{indranil@math.tifr.res.in}
\author[V. Mu\~{n}oz]{Vicente Mu\~{n}oz}
\address{Departamento de Algebra, Geometr\'{\i}a y Topolog\'{\i}a, Universidad de M\'alaga,
Campus de Teatinos, s/n, 29071 M\'alaga, Spain}
\email{vicente.munoz@uma.es}
\author[A. Murillo]{Aniceto Murillo}
\address{Departamento de Algebra, Geometr\'{\i}a y Topolog\'{\i}a, Universidad de M\'alaga,
Campus de Teatinos, s/n, 29071 M\'laga, Spain}
\email{aniceto@uma.es}
\subjclass[2010]{14M25, 55P62, 55Q52, 52B20}
\keywords{Complex toric variety, rational homotopy, elliptic space}
\begin{document}

\begin{abstract}
We give a characterization of all complete smooth toric varieties whose rational homotopy is of
elliptic type. All such toric varieties of complex dimension not greater than three are
explicitly described.
\end{abstract}

\maketitle

\section{Introduction}\label{sec:1}

Toric varieties have been widely studied from diverse points of view. Since they
have a combinatorial description in terms of polytopes
and many of their
topological properties (like the cohomology) are encoded combinatorially. Moreover,
they furnish a large source of examples of algebraic varieties.

In this paper, we are interested in the behaviour of the rank of the homotopy groups of
compact smooth toric varieties. Such a variety $X$ is always a formal algebraic manifold,
which means that its rational homotopy type depends only on its rational cohomology
$H^*(X,\,\QQ)$. In particular $\pi_*(X)\ox \QQ$, which is a rational vector space whose dimension is
precisely the total rank of $\pi_*(X)$, is explicitly determined by $H^*(X,\,\QQ)$.

It is known that a simply connected finite CW-complex $X$ is either {\em elliptic}, that is,
$\dim\pi_*(X)\otimes\QQ\,<\,\infty$, or it is {\em hyperbolic}, in which case
$\dim\pi_{\le k}(X)\otimes \QQ$ grows exponentially as $k$ increases.

Here we first notice that elliptic toric varieties are those whose rational cohomology
algebra is a complete intersection, that is, a polynomial algebra truncated by an ideal
generated by a regular sequence. Moreover, we prove that the Poincar\'e polynomial of these
toric varieties coincides with that of a product of complex projective spaces (see Theorem
\ref{main}). However, it may happen that the Poincar\'e polynomial of
a smooth toric variety $X$ is that of a product of complex projective spaces, but
$X$ is not elliptic (see Example \ref{example}(6)).

We illustrate the above result by describing all (algebraic isomorphism classes of) elliptic smooth toric varieties of dimension
less than or equal to $3$; see Theorems \ref{dos} and \ref{tres}. In dimension $2$, only
$\CP^2$ and the Hirzebruch surfaces $\PP\bigl(\cO_{\CP^1}\oplus \cO_{\CP^1}(b)\bigr)$ are
elliptic smooth toric varieties. Their homotopy types are $\CP^2$, $\CP^1\x \CP^1$ and $\CP^2
\# \overline{\CP^2}$.

The $3$-dimensional elliptic smooth toric varieties are, up to isomorphism, $\CP^3$,
$\PP(\cO_{\CP^2}\oplus \cO_{\CP^2}(c))$, $\PP(\cO_{\CP^1}\oplus \cO_{\CP^1}(a)
\oplus \cO_{\CP^1}(b))$, and
$\CP^1$-bundles over Hirzebruch surfaces.
These varieties have the rational homotopy type of $\CP^3\#\CP^3$, $\CP^1\times \CP^2$ and
a quotient $(S^3\times S^3\times S^3)/ T^3$ respectively.

Elliptic complex manifolds of complex dimension less than or equal to $2$ have been classified in
\cite{AB}. Also, the rational homotopy type of moduli spaces of certain vector bundles over complex curves
has been analysed in \cite{Bi-Mu}. Our results here complement these references and show, in particular,
the existence of a very large number of (non isomorphic) hyperbolic varieties.

We finally stress that our results and classification on toric varieties are always up to algebraic isomorphisms. If one
loosens up this rigidity there are interesting results in the literature. A toric variety is in particular a torus
manifold, that is, a smooth $2n$-dimensional manifold admitting an action of a real $n$-torus $\TT^n=(S^1)^n$. In
\cite[Theorem 1.1]{wie}, it is shown that an elliptic simply connected torus manifold whose integral cohomology is evenly graded is
always homeomorphic to a quotient of a free linear torus action on a product of spheres. Moreover \cite[Theorem 1.2]{wie},
if the toric manifold is non-negatively curved then homeomorphism can be replaced by equivariant diffeomorphism. More
generally, see \cite[Theorem A]{wie2}, any elliptic simply connected torus orbifold has the rational homotopy type of a
quotient of a product of spheres by a linear, almost free, torus action. Also, T. Bahri has drawn our attention to
\cite{Gu} for related results.

In low dimensions one can be more precise: any $4$-dimensional torus manifold is
equivariantly diffeomorphic to either the $4$-sphere, an equivariant connected sum of
copies of complex projective planes (possibly with reversed orientation) and Hirzebruch surfaces \cite{or}.
As mentioned above, the only ones that are rationally elliptic are the $4$-sphere, the complex projective plane and the Hirzebruch surfaces.
On the other hand, a simply connected $6$-dimensional torus manifold whose cohomology is 
evenly graded is equivariantly diffeomorphic to either the $6$-dimensional sphere, an equivariant connected sum
of copies of $6$-dimensional quasitoric manifolds, or $S^4$-bundles over $S^2$, see \cite[Theorem 1.3]{ku}.
Our results agree with this classification up to diffeomorphism.

After the completion of this work, Wiemeler has provided a classification of rationally elliptic toric orbifolds
up to algebraic isomorphism in any dimension \cite{Wieme}.

\subsection*{Acknowledgements}
We thank Matthias Franz for warning us about the formality issue of compact toric varieties,
Michael Wiemeler for pointing us to references on torus manifolds, and Tony Bahri for referring us to \cite{Gu}.
We are grateful to the two referees for their useful comments.
The first author is supported by a J. C. Bose Fellowship, and school of mathematics,
TIFR, is supported by 12-R$\&$D-TFR-5.01-0500.
The second author was partially supported by the MICINN grant MTM2015-63612-P (Spain).
The third author was partially supported by the MICINN grant MTM2016-78647-P (Spain).

\section{Toric varieties and rational homotopy}

We recall some results from rational homotopy theory that will be used, and also summarize
a brief introduction to toric varieties. More details can be found in \cite{FHT} and
\cite{Fulton} respectively.

A {\em toric variety} is a complex algebraic variety $X$
of complex dimension $N$ equipped with an algebraic action of the complex
torus $T\,=\,(\CC^*)^N$ such that $X$ contains a dense $T$-orbit on which the
action of $T$ is free.

Any toric variety can be described by a fan as follows. Let $\Gamma$ be a lattice, meaning
a group isomorphic to $\ZZ^N$, and let $\Delta$ be a fan of $\Gamma$, that is, a collection of
strongly convex rational polyhedral cones in the vector space $V=\Gamma \ox_\ZZ \RR$,
satisfying the conditions of a simplicial complex: every face of a cone in $\Delta$ is also a
cone in $\Delta$, and the intersection of two cones in $\Delta$ is a face of each of the cones.
Recall that a strongly convex rational polyhedral cone $\sigma$ in $F$ is a cone with apex at
the origin, generated by a finite number of vectors in the lattice, and which intersects the
opposite cone $-\sigma$ only at the apex. A fan is complete if the union of its conforming
cones is $V$.

Let $M\,=\,\Hom(\Gamma,\,\ZZ)$ be the dual lattice. Every cone $\sigma$ determines a
finitely generated commutative semigroup
 $$
 S_\s\,=\,\{ u\,\in\, M \, \mid \, \la u,\,v\ra \,\geq\, 0, ~\text{ for all }~ v\,\in\,\s\},
 $$
and an associated affine variety
 $$
 U_\s \,=\,\Spec (\CC[S_\s])\, .
 $$
Any face $\tau$ of a cone $\s$ in a given fan $\Delta$ induces an inclusion $S_\s\,\subset\,
S_\tau$ which, in turn, produces an open embedding $U_\tau \,
\longrightarrow\, U_\s$. All these affine varieties fit together to form an
algebraic variety which denoted by $X(\Delta)$.
As the apex $(0)$ is a face of every cone and $S_{(0)}\,=\,M$, it follows that $$U_{(0)}\,=\,
\Spec(\CC[x_1,x_1^{-1}, \ldots , x_N,x_N^{-1}])\,=\, (\CC^*)^N\,=\,T$$ is contained as
an open subset of
$X(\Delta)$. Moreover, for every cone $\sigma$ of $\Delta$, the
group $T$ acts on $U_\sigma$ via the map induced by the diagonal $\CC[S_\s] \,\longrightarrow\,
\CC[S_\s]\otimes \CC[S_{(0)}]$. These also fit together to produce an action of $T$ on $X(\Delta)$.

A toric variety is compact if and only if its generating fan is complete \cite[\S3.5]{da}.
The criterion for smoothness is that for any collection of $N$ spanning vectors
$v_1,\ldots, v_N$ of the fan, the following holds:
$$|\det(v_1,\ldots, v_N)|\,=\, 1\, ,$$ that is, they span $\ZZ^N$ (cf.\ \cite[page 29]{Fulton}).
Note that the sign of the determinant
is positive if the basis $\{v_1, \ldots, v_N\}$ is oriented.
From now on we shall only consider compact and smooth toric varieties.
Recall that these toric varieties are all simply connected \cite[\S3.2]{Fulton}.

Concerning the rational homotopy theory, any topological space considered here shall be (of the
homotopy type of) a simply connected CW-complex. Two such spaces $X$ and $Y$ have the same
rational homotopy type if there is a continuous map $f\,\colon\, X\,\longrightarrow\, Y$ such that
$\pi_*(f)\otimes
\QQ\colon\pi_*(X)\otimes\QQ\stackrel{\cong}{\longrightarrow}\pi_*(Y)\otimes\QQ$ is an
isomorphism. Such a map $f$ is called a rational homotopy equivalence. A space $X$ is {\em formal} if
its rational homotopy type depends only on its rational singular cohomology algebra
$H^*(X,\,\QQ)$. To give a more precise definition of this property the following notion is indispensable, see \cite[\S12]{FHT}:

A {\em Sullivan model} of a given a commutative differential graded algebra (cdga henceforth) $A$ is a cdga quasi-isomorphism of the form
$$
(\Lambda V,d)\stackrel{\simeq}{\longrightarrow} A.
$$
where $\Lambda V$ denotes the free commutative algebra generated by the graded vector space $V$ and whose differential $d$
satisfies a special recurrence property: there is a well ordered basis $\{v_\alpha\}$ on $V$ such that, for each $\alpha$, $dv_\alpha$ is a ``polynomial'' in $\Lambda V$ which only involves the generators $\{v_\beta\}_{\beta<\alpha}$. If $A$ is the cdga $\Omega_{PL}(X)$ of ``polynomial forms'' on a given space $X$, this is a {\em Sullivan model of $X$}. Whenever $X$ is a manifold, $\Omega_{PL}(X)$ can be replaced by the classical de Rham forms $\Omega(X)$.

As a Sullivan model of a given cdga characterizes its homotopy type,
a space $X$ is {\em formal} if a Sullivan model of $H^*(X;\,\QQ)$ (with trivial differential) is also a Sullivan model of $X$.
Classical examples of formal spaces are compact K\"ahler
manifolds \cite{DGMS}. On the other hand, a formal space $X$ is said to be {\em intrinsically formal} if it is the only
(up to rational homotopy equivalence) space with $H^*(X,\,\QQ)$ as rational cohomology algebra.

The {\em elliptic-hyperbolic dichotomy} \cite[\S33]{FHT}
asserts that given a simply finite CW-complex $X$ then, either its homotopy groups have finite total rank,
$$
\dim\pi_*(X)\otimes \QQ\,<\, \infty\, ,
$$
or else, $\dim\pi_i(X)\otimes \QQ$ grows exponentially as $i$ increases: there exists $\lambda>1$ and an integer $n$ such that
$$
\sum_{i\le k}\pi_i(X)\otimes \QQ\ge \lambda^k,\quad \text{for $k\ge n$}.
$$
In the first case the space $X$ is said to be {\em elliptic}; otherwise, it is {\em hyperbolic}.

\section{Ellipticity of toric varieties}

We first check formality of compact toric varieties. This is not automatic as they are not K\"ahler in general: indeed, smooth 
compact toric varieties are not necessarily projective (see \cite{Ew} for examples). On the other hand, since $H^2(X)$ is 
generated by divisors, it follows that $H^2(X)=H^{1,1}(X)$. Thus, if $X$ is K\"ahler then it is projective, and the assertion 
follows. For completeness, we remark that quasitoric manifolds, the topological analogue of projective toric varieties, are 
also known to be formal \cite[Corollary 7.2]{paray}.

\begin{proposition}\label{prop::3.1}
A smooth compact toric variety $X$ is formal.
\end{proposition}

\begin{proof} We first recall the explicit description of the integer cohomology ring of a smooth toric variety $X$.
Let $D_1,\dots,D_d$ be the {\em irreducible $T$-divisors} of $X$, that
is, the submanifolds of complex codimension $1$ which are invariant under the
action of the complex torus $T$ \cite[\S3.3]{Fulton}. Each $T$-divisor $D_i$ correspond to
an edge ($1$-cone) of the fan $\Delta$, and each of them is determined by the first point $v_i$ in the lattice which is touched by the edge. More concretely, $D_i=U_{v_i}-U_{(0)}$.

The cohomology of a compact smooth toric variety is:
\begin{equation}\label{ecuacion}
H^*(X)\,=\,\ZZ[D_1,\ldots, D_d]/I\, ,
\end{equation}
where each $D_i$ is of degree $2$ and $I$ is the ideal generated by the following:
\begin{enumerate}
\item[(i)] The products $D_{i_1}\cdots D_{i_k}$, for $v_{i_1},\ldots, v_{i_k}$ distinct
vertices which do not lie in a cone of $\Delta$, and

\item[(ii)] $\sum_{i=1}^d \la u_j,\,v_i\ra D_i$, with $\{u_j\}$ a basis of $M$.
\end{enumerate}
This is proved in \cite[page 106]{Fulton} for projective toric varieties. 
The statement for any compact smooth toric variety follows from \cite{da}.
%

Now, we inductively build a Sullivan model of $X$
 by means of this cohomology description.
First, for each generator $D_i$ and each relation in (ii) fix a Thom form $\eta_i\,\in \,\Omega^2(X)$ representing $D_i$,
 and for each $u_j$ we fix a $1$-form $\xi_j\in\Omega^1(X)$ such that $d\xi_j=\sum_{i=1}^d \la u_j,\,v_i\ra \eta_i$.
Consider the graded vector space $V_1=\la w_i, w_j\ra$,
where $|w_i|=2$, $|w_j|=1$, set $dw_i=0$, $dw_j=\sum_{i=1}^d \la u,\,v_i\ra w_i$ and define
$$
f\colon (\Lambda V_1,d)\longrightarrow \Omega (X)
$$
by $f(w_i)\,=\,\eta_i$, $f(w_j)\,=\,\xi_j$. Obviously $H(f)$ is surjective and $H^2(f)$ is an isomorphism.

Next, observe that if
$v_{i_1},\ldots, v_{i_k}$ are distinct vertices which do not lie in a cone of the fan,
then the intersection of the divisors $D_{i_1},\ldots, D_{i_k}$ is empty and thus, $\eta_{i_1}\wedge\ldots
\wedge \eta_{i_k}=0$. Hence, the kernel of $f$ is generated by the corresponding $w_{i_1}\cdots w_{i_k}$.
Then, define the graded vector space $V_2=\la w_I\ra$, where $I$ runs over the tuples $\{i_1,\ldots, i_k\}$
such that $v_{i_1},\ldots, v_{i_k}$ do not lie in a cone, declare
$dw_I=w_{i_1}\cdots w_{i_k}$ and extend $f$ to
$$
f\colon ( \Lambda (V_1\oplus V_2),d)\longrightarrow \Omega (X),
$$
by setting $f(V_2)=0$. Again, $H(f)$ may fail to be injective as non-trivial kernel may appear with the new generators. Hence,
we continue this process to find a quasi-isomorphism
$$
f\colon(\Lambda V,d)\stackrel{\simeq}{\longrightarrow}\Omega(X)
$$
in which $V=\oplus_{n\ge 1} V_n$, $dV_n\subset \Lambda (V_1\oplus\dots\oplus V_{n-1})$ and $f(V_k)=0$, for $k\ge 2$. By construction, this is a Sullivan model of $X$.

Finally, observe that $f$ readily produces also a quasi-isomorphism
$$
\widetilde f\colon (\Lambda V,d)\stackrel{\simeq}{\longrightarrow} (H^*(X),0)
$$
which is trivial on any generator except $\widetilde f(w_i)=D_i$. This proves the formality.
\end{proof}

\begin{remark}
The proof of Proposition \ref{prop::3.1} generalizes to locally standard torus manifolds with integral cohomology
generated in degree $2$. This is because by \cite{MP06} there is a similar presentation of the cohomology
ring as in the case of toric manifolds.
\end{remark}

In what follows, and as usual, the Betti numbers of a given space are $b_k(X)\,=\,\dim H^k(X,\,\QQ)$, the Euler
characteristic of $X$ is $\chi(X)\,=\,\sum_k (-1)^k b_k$, and its Poincar\'e polynomial is given by
 $$
 P_X(t)\,=\,\sum_k b_k t^k\, .
 $$

As for any local Noetherian ring, recall that a polynomial algebra $\bk[x_1,\dots,x_n]/I$ over
a field $\bk$ is a {\em complete intersection} if $I$ is generated by a {\em regular sequence}
$p_1,\dots,p_m$. That is, for each $i=2,\dots,n$, the class of each $p_i$ is not a zero
divisor in $\bk[x_1,\dots,x_n]/(p_1,\ldots,p_{i-1})$. If this polynomial algebra is finite
dimensional, it is a complete intersection if and only if $m=n$.

\begin{theorem} \label{main}
A smooth compact toric variety $X$ is elliptic if and only if its cohomology algebra
$H^*(X,\,\QQ)$ is a complete intersection concentrated in even degrees.

When the above condition is satisfied, $X$ is intrinsically formal, and its Poincar\'e polynomial
coincides with that of a product of complex projective spaces.
\end{theorem}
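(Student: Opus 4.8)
The plan is to reduce everything to the graded cohomology algebra $H:=H^*(X,\QQ)$, using two standard features of a smooth compact toric variety $X$. First, $X$ is simply connected and formal (being K\"ahler, as recalled above), so its minimal Sullivan model is obtained directly from $H$, namely as the bigraded Halperin--Stasheff model of $H$; consequently $\dim\pi_*(X)\otimes\QQ$ equals the total dimension of the space $V$ of generators of this model, and $X$ is elliptic exactly when $V$ is finite dimensional. Second, by the Jurkiewicz--Danilov description of toric cohomology (cf.\ \cite{Fulton}), $H$ is generated by its degree-$2$ part and is therefore concentrated in even degrees. Thus the clause ``concentrated in even degrees'' is automatic, and the theorem becomes a purely algebraic statement about the minimal CDGA model of $H$.

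For the implication that a complete intersection is elliptic, I would write $H=\QQ[x_1,\dots,x_n]/(p_1,\dots,p_n)$ with the $x_i$ in even degrees and $p_1,\dots,p_n$ a regular sequence, and exhibit the pure Sullivan algebra $(\Lambda(x_1,\dots,x_n)\otimes\Lambda(y_1,\dots,y_n),\,d)$ with $dx_i=0$, $dy_j=p_j$ and $|y_j|=|p_j|-1$. Since the $p_j$ form a regular sequence this is a Koszul complex, so its cohomology is $\QQ[x]/(p)\cong H$; being minimal, it is the model of $X$. It has exactly the $2n$ generators $x_i,y_j$, whence $\dim\pi_*(X)\otimes\QQ=2n<\infty$ and $X$ is elliptic.

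For the converse I would use that $H^{\mathrm{odd}}=0$, so that $\chi(X)=\sum_k b_{2k}>0$ and $X$ is positively elliptic. The structure theory of elliptic spaces with positive Euler characteristic (\cite[\S32]{FHT}) then forces the minimal model to be pure and the elements $dy_j$ to form a regular sequence in the polynomial algebra on $V^{\mathrm{even}}$; equivalently, the generating space $V$ of the bigraded model of $H$ is finite dimensional if and only if $H$ is a complete intersection. Either way one concludes that $H$ is a complete intersection. I expect this to be the main obstacle, since it is the one place where the deep classification of positively elliptic spaces, rather than an explicit construction, is what does the work.

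Finally, for the two supplementary assertions. Intrinsic formality follows from the fact that the bigraded model of a complete intersection in even degrees is two-stage: the generators are $V_0$, dual to the degree-$2$ algebra generators, together with $V_1$ killing the regular sequence, and there are no generators in higher resolution degree because the Koszul complex above is already acyclic. Hence any CDGA with cohomology $H$ has a model with this two-stage underlying algebra, and the perturbation of the pure differential carries no nontrivial obstruction, so every such space is formal. For the Poincar\'e polynomial, minimality of the generators forces $|x_i|=2$ and $|p_j|=2m_j$ with $m_j\ge 2$, and therefore $$P_X(t)=\frac{\prod_{j=1}^n(1-t^{2m_j})}{(1-t^2)^n}=\prod_{j=1}^n\bigl(1+t^2+\cdots+t^{2(m_j-1)}\bigr),$$ which is precisely the Poincar\'e polynomial of $\CP^{m_1-1}\times\cdots\times\CP^{m_n-1}$.
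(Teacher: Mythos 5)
Your proposal is correct, and on the hard implication it coincides with the paper: both arguments reduce everything to the fact that toric cohomology is evenly graded and generated in degree $2$ (the Jurkiewicz--Danilov presentation), and both deduce elliptic $\Rightarrow$ complete intersection from the structure theory of elliptic spaces with positive Euler characteristic in \cite[\S 32]{FHT}, which is exactly the paper's appeal to Proposition 32.16 there. Where you genuinely diverge is in the converse and the supplementary claims, which the paper settles purely by citation: it quotes \cite[\S 3]{feha} for the statement that an evenly graded complete intersection is intrinsically formal and elliptic, and the $F_0$-space formula of \cite[Proposition 32.16]{FHT} for the Poincar\'e polynomial. You instead prove these directly: ellipticity via the explicit pure Koszul model, whose acyclicity follows from regularity of the sequence, giving $\dim\pi_*(X)\otimes\QQ=2n$ --- and note that this uses only the formality of $X$ itself (from \cite{DGMS}), not intrinsic formality, which is a small but real logical economy; intrinsic formality via the two-stage bigraded model of $H$; and the Poincar\'e polynomial as the Hilbert series of the Koszul resolution, rather than via the homotopy degrees $2\alpha_i$ and $2\beta_j-1$. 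What your route buys is a self-contained argument with the homotopy-theoretic content made explicit; what the paper's buys is brevity. The one place you should tighten is the phrase ``the perturbation of the pure differential carries no nontrivial obstruction'': the precise statement is that any Halperin--Stasheff filtered model differential $D$ satisfies $(D-d_0)(V_i)\subset\Lambda\bigl(V_{\le i-2}\bigr)$, so when $V=V_0\oplus V_1$ the perturbation vanishes identically and every space with cohomology $H$ has the bigraded (Koszul) model as its minimal model; as written, your sentence could be read as an unjustified appeal to vanishing obstructions, when in fact it is precisely the proof of the result the paper imports from \cite{feha}.
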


\begin{proof} By the general description in (\ref{ecuacion}), $H^*(X)$ is evenly graded.
Assume that $X$ is elliptic. By \cite[Proposition 32.16]{FHT}, the cohomology $H^*(X,\,\QQ)$
is evenly graded if and only if it is of the form $\QQ[x_1,\dots,x_n]/(p_1,\dots,p_n)$ in
which every $x_i$ is of even degree while $p_1,\,\ldots,\,p_n$ is a regular sequence.

Conversely, it is well-known (see for instance \cite[\S3]{feha}) that any space whose
cohomology algebra is a complete intersection concentrated in even degrees is intrinsically
formal and elliptic.

Let $X$ be an elliptic toric variety whose rational cohomology is
the complete intersection $\QQ[x_1,\dots,x_n]/(p_1,\dots,p_n)$ in which each $x_i$ is of
even degree. Again by \cite[Proposition 32.16]{FHT}, this has two other equivalent
re-formulations:
\begin{enumerate}
\item $\chi(X)>0$, and

\item $\dim\pi_{\text{even}}\otimes\QQ=\dim\pi_{\text{odd}}\otimes\QQ\,=\,n$.
\end{enumerate}
Moreover, if $2\alpha_1,\,\cdots,\, 2\alpha_n$ and $2\beta_1-1,\,\cdots,\,2\beta_n-1$ are the
degrees of a basis of $\pi_*(X)\otimes \QQ$,
then $2\alpha_i$ is precisely the degree of $x_i$ for all $i$, and the Poincar\'e polynomial of $X$ is given by
$$
 P_X(t)=\frac{\prod_{i=1}^n(1-t^{2\beta_i})}{\prod_{i=1}^n(1-t^{2\alpha_i})}\,.
 $$

Now, since $H^*(X,\,\QQ)$ is generated by elements of degree $2$, every $\alpha_i=1$ and
 \begin{equation}\label{eqn:Pt}
 P_X(t)=\frac{\prod_{i=1}^n(1-t^{2\beta_i})}{(1-t^{2})^n}\,.
 \end{equation}
But the Poincar\'e polynomial of the projective space $\CP^k$ is
 $$
 P_{\CP^k}(t)= \frac{1-t^{2k+2}}{1-t^2}= 1+t^2+t^4+\ldots +t^{2k}\,,
$$
and thus, the Poincar\'e polynomial of $X$ is the same as that of
 $$
 \CP^{\beta_1-1} \times \ldots\times \CP^{\beta_n-1}\, .
 $$
In particular, we have $\dim_\CC X\,=\,\beta_1+\ldots + \beta_n -n$.
\end{proof}

\begin{remark}
The proof of Theorem \ref{main} generalizes to manifolds with cohomology generated in degree $2$.
\end{remark}

Note that Theorem \ref{main} implies in particular that, for an elliptic toric variety $X$ of (complex) dimension $N$, we have
\begin{equation}\label{eqn:b2}
b_2 \,=\, n \,\leq\, \sum (\beta_i-1) \,=\,N\, .
\end{equation}
This is a classical fact, which is valid in general for elliptic $1$-connected finite CW-complexes.

\begin{example}\label{example}\mbox{}
\begin{enumerate}
\item The projective space $\CP^N$ is a toric variety. The torus $T=(\CC^*)^N$ acts by $(t_1,\ldots,t_N)\cdot
[z_0,z_1,\ldots,z_N]=[z_0,t_1z_1,\ldots, t_Nz_N]$. Clearly $\CP^N$ is elliptic.

\item Since the product of toric varieties is a toric variety, the polynomial (\ref{eqn:Pt}) is the Poincar\'e polynomial of (at least one) elliptic toric variety, namely $ \CP^{\beta_1-1} \times \ldots\times \CP^{\beta_n-1}$.

\item If $X$ is a toric variety, and $Y\,\subset\, X$ is a $T$-invariant subvariety, then the
blow-up $X'\,=\,\mathrm{Bl}_Y X$
of $X$ along $Y$ is again a toric variety. One particular example of this is the blow-up
of a $T$-fixed point.

For instance, take $X\,=\,\CP^2$ and blow-up at a $T$-fixed point $p$. Then $X'\,=\,
\mathrm{Bl}_pX$ is a toric variety, and
as a $C^\infty$ manifold it is $X\cong \CP^2\#\overline{\CP^2}$, where
$\overline{\CP^2}$ means $\CP^2$ with the opposite orientation. The cohomology of $X'$ is
 \begin{equation}\label{eqn:CP2}
 H^*(X',\,\QQ)\,=\,\QQ[x,y]/(xy, \,x^2+y^2)\, ,
 \end{equation}
where $x$ is the cohomology class of a line in $\CP^2$, $y$ the cohomology class of the exceptional divisor, and
$x^2$ is the volume form. The cohomology algebra (\ref{eqn:CP2}) is a complete intersection, so by Theorem \ref{main},
the manifold $X'$ is elliptic.

This also follows from \cite[Theorem 1.1]{AB} as $\CP^2\#\overline{\CP^2}$ is diffeomorphic
to a Hirzebruch
surface; it also follows from the more general result, \cite[Lemma 3.2]{pape}, that
classifies the homeomorphic type
of simply connected elliptic closed real manifolds of dimension $4$.

\item Consider a toric variety $Y$ with equivariant line bundles $L_0,\ldots, L_k$. Then the
projectivization of the total space of the bundle $L_0\oplus \ldots\oplus L_k \longrightarrow Y$ is
again a toric variety \cite[page 42]{Fulton}; denote $X\,:=\,\PP(L_0\oplus \ldots \oplus L_k)$. Moreover, if $Y$
is elliptic then $X$ is also elliptic. This follows from the fact that topologically $X$ is a
fibration
with fiber $\CP^k$ and base $Y$, and hence it has finite-dimensional total rational homotopy.
Furthermore, the Poincar\'e polynomial of $X$ is $P_X(t)\,=\,P_Y(t) P_{\CP^k}(t)$; this follows from
\cite{Delig}.

In particular, taking $Y\,=\,\CP^1$, we have the Hirzebruch surfaces $X_b=\PP(\cO_{\CP^1}\oplus \cO_{\CP^1}(b))$,
and these are (again) elliptic and toric. Topologically, for $b$ even we have a diffeomorphism
$X_b\cong \CP^1\x \CP^1$, and for $b$
odd we have $X_b\cong \CP^2\#\overline{\CP^2}$. These share the same Poincar\'e polynomial, but have different
cohomology algebras.
So they have different homotopy type although they have isomorphic homotopy groups.

\item Let $X$ be the blow-up of $\CP^N$ at a point. As a smooth manifold $X\cong \CP^N\# \overline{\CP^N}$.
Recall also that $\overline{\CP^N}\cong \CP^N$ for $N$ odd, since such $\CP^N$ admits an orientation reversing
diffeomorphism. The cohomology is
$$
 H^*(X,\,\QQ)\,=\,\QQ[x,y]/(xy, x^N+y^N),
$$
where $x$ is the class of the hyperplane of $\CP^N$ and $y$ is the class of the exceptional divisor.
This algebra is a complete intersection, and thus $X$ is elliptic.

The variety $X$ can also be described as a $\CP^1$-bundle over $\CP^{N-1}$. More concretely,
$X\,=\,\PP(\cO_{\CP^{N-1}} \oplus\cO_{\CP^{N-1}}(1))$. It follows again that it is toric and elliptic.

\item Consider the toric variety $X$ given as the blow-up of $\CP^3$ at two fixed points. As a smooth
manifold $X\cong \CP^3\#\overline{\CP^3}\#\overline{\CP^3} \cong \CP^3\# \CP^3\# \CP^3$. Then,
 $$
 H^*(X,\,\QQ)\,=\,\QQ[x,y,z]/(xy,xz,yz,x^3-y^3,x^3-z^3)\,.
 $$
Here, $x,y,z$ are the generators of $H^2(\CP^3,\, \QQ)$ for each of the three connected summands.
This algebra is not a complete intersection (by Poincar\'e duality it needs at least three relations in degree $4$ and
another two relations in degree $6$). Thus, by Theorem \ref{main}, the manifold $X$ is hyperbolic.
However, its Poincar\'e polynomial is
$P_X(t)\,=\,(1+t^2)^3$ and it coincides with that of $\CP^1\times\CP^1\times \CP^1$. This example shows that the Poincar\'e polynomial does not characterizes the rational homotopy type of a compact smooth toric variety.
Also $b_2=3$, so it satisfies the inequality in (\ref{eqn:b2}) even though $X$ is not elliptic.
\end{enumerate}
\end{example}

\begin{remark}
It is straightforward to check that any Poincar\'e duality algebra $H$ generated in degree $2$ and concentrated in degrees 
$\le\, 4$ is necessarily a complete intersection. In particular, as observed in the proof of Theorem \ref{main}, such an 
algebra is intrinsically formal and thus, it is the rational cohomology of a unique, up to rational homotopy, space $X$ whose 
Euler homotopy characteristic is zero. That is, $\dim\pi_{\text{even}}X\otimes\QQ\,=\,\dim\pi_{\text{odd}}X\otimes\QQ$. We then may 
apply \cite[Theorem 1.2]{mishi} to conclude that there are infinitely many different isomorphism classes of these algebras, 
namely:
$$
\QQ[x]/(x^3),\qquad \QQ[x,y]/(x^2,y^2),\qquad \QQ[x,y]/(x^2+\lambda y^2,xy),\,\,\lambda\in\QQ^*/(\QQ^*)^2.
$$
As stated, any of these algebras determines a unique and distinct rational
homotopy type. However only the following four of them
$$
\QQ[x]/(x^3),\qquad \QQ[x,y]/(x^2,y^2),\qquad \QQ[x,y]/(x^2\pm y^2,xy),
$$
are realized by manifolds (see \cite[Lemma 3.2]{pape}), namely by
 $$
\CP^2,\quad \CP^1\times \CP^1,\quad \CP^2\#\CP^2,\quad \CP^2\#\overline{\CP^2}.
$$
The third one is not a complex manifold (from the Enriques-Kodaira classification \cite{BPV}). All
the other three are toric by Example \ref{example}.
\end{remark}

On the other hand, Example \ref{example}(6) exhibits a hyperbolic toric variety $X$ of complex
dimension $3$ with $P_X(t)=(1+t^2)^3$. Such examples do not exist for the other possible choice
of the Poincar\'e polynomial in the same dimension given by Theorem \ref{main}, namely
the polynomial $(1+t^2)(1+t^2+t^4)$.

\begin{proposition}\label{propo} Let $H$ be a Poincar\'e duality algebra generated in
degree $2$ with $P_H(t)=(1+t^2)(1+t^2+t^4)$. Then:
 \begin{enumerate}
 \item[(i)] $H$ is a complete intersection.
 \item[(ii)] There are countable different rational homotopy types of elliptic simply connected CW-complexes whose cohomology is of this kind.
\item[(iii)] The only manifolds with cohomology of this kind are $\CP^1\times \CP^2$ and\break $\CP^3\#\CP^3$.
 \end{enumerate}
\end{proposition}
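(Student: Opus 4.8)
Throughout write $H=\QQ[x,y]/I$ with $|x|=|y|=2$; this is legitimate because $H$ is generated in degree $2$ and $b_2=2$, so there are no relations in degree $2$. The Hilbert function of $H$ is $(1,2,2,1)$ in cohomological degrees $0,2,4,6$. Counting dimensions degree by degree shows that $I$ has exactly one generator $p_1$ in degree $4$ (a binary quadratic form in $x,y$), since $\dim\Sym^2\langle x,y\rangle-\dim H^4=3-2=1$, and exactly one further generator $p_2$ in degree $6$ (a binary cubic form), since $p_1x$ and $p_1y$ are already linearly independent in $\QQ[x,y]_6$ while $\dim\QQ[x,y]_6-\dim H^6=4-1=3$.

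For (i) I would argue that $H$ is a graded Artinian Gorenstein algebra (Poincar\'e duality being exactly the Gorenstein condition) of embedding codimension $2$. By the classical structure theorem for codimension-two Gorenstein ideals (the self-dual Hilbert--Burch resolution), every such ideal is a complete intersection; equivalently, $H$ is the apolar algebra $\QQ[x,y]/\operatorname{Ann}(F)$ of the binary cubic cup-product form $F(a,b)=(ax+by)^3\in H^6\cong\QQ$, and the apolar ideal of a binary form is always a complete intersection, with generator degrees summing to $\deg F+2=5$, here $2$ and $3$. Thus $p_1,p_2$ form a regular sequence and $H=\QQ[x,y]/(p_1,p_2)$. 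As a check, $\QQ[x,y]/(p_1,p_2)$ then has Poincar\'e polynomial $\frac{(1-t^4)(1-t^6)}{(1-t^2)^2}=(1+t^2)(1+t^2+t^4)=P_H(t)$, and since $I\supseteq(p_1,p_2)$ with both quotients of the same finite dimension, $I=(p_1,p_2)$.

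For (ii), the isomorphism type of $H$ determines, and is determined by, the $\GL_2(\QQ)$-orbit of the canonical quadric $p_1=\ker(\Sym^2H^2\to H^4)$ under the action of $\GL(H^2)=\GL_2(\QQ)$. When $p_1$ is anisotropic its discriminant in $\QQ^*/(\QQ^*)^2$ is an isomorphism invariant of $H$; as this class runs over the countably many nonsquare classes one gets infinitely many pairwise non-isomorphic algebras, realised concretely by $\QQ[x,y]/(x^2-dy^2,\,x^3)$. Each such $H$ is, by Theorem \ref{main}, intrinsically formal and elliptic, and is realised by the simply connected space with pure minimal Sullivan model $(\Lambda(x,y,u,v),d)$, $|x|=|y|=2$, $|u|=3$, $|v|=5$, $du=p_1$, $dv=p_2$. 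Since all these algebras are defined over $\QQ$ there are at most countably many, so there are exactly countably many rational homotopy types, proving (ii).

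Part (iii) is where the real difficulty lies. The strategy is to identify, among the algebras of (ii), those arising as $H^*(M,\QQ)$ for a closed manifold $M$, which by the classification of simply connected closed $6$-manifolds (Wall, Jupp and \v{Z}ubr) is controlled by the integral cubic intersection form on $H^2(M)$. The two algebras in the statement are $H^*(\CP^1\times\CP^2)=\QQ[x,y]/(x^2,y^3)$, whose quadric $p_1=x^2$ has rank one, and $H^*(\CP^3\#\CP^3)=\QQ[x,y]/(xy,\,x^3-y^3)$, whose quadric $p_1=xy$ splits over $\QQ$; one then verifies each is realised by a unique manifold up to the relevant equivalence. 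The main obstacle is the exclusion of all remaining (anisotropic) cases. This step needs genuinely more than Poincar\'e duality: for instance the full flag manifold $\SU(3)/\TT^2$ is a closed projective $6$-manifold with the same Poincar\'e polynomial whose relation quadric $p_1=x^2+xy+y^2$ is anisotropic of discriminant $-3$, so the anisotropic algebras are not excluded by topology alone. Reaching exactly the two rings of the statement therefore requires invoking the ambient toric hypothesis (equivalently, the realizability congruences relating $p_1(M)$ and $w_2(M)$ together with the torus action) to rule such examples out, and carrying out this exclusion is the step I expect to be the crux.
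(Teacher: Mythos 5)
Your parts (i) and (ii) are correct, and both run along routes genuinely different from the paper's. For (i), the paper complexifies, factors the quadric $p$ and the cubic $q$ into linear forms, and does a case analysis (using Poincar\'e duality to exclude common linear factors) to conclude $I=(p,q)$; you instead observe that an evenly graded Poincar\'e duality algebra is a graded Artinian Gorenstein algebra and invoke the classical structure theorem that Gorenstein ideals of codimension two are complete intersections (equivalently, Macaulay's theorem on apolar ideals of binary forms), closing with a Hilbert series comparison. Your route is shorter and more conceptual. For (ii), the paper quotes the classification of complete intersections of dimension $<8$ from \cite[Theorem 1.2]{mishi}, while you construct the explicit countable family $\QQ[x,y]/(x^2-dy^2,\,x^3)$ separated by the square class of the discriminant of the canonical quadric $\ker(\Sym^2H^2\to H^4)$, with countability from intrinsic formality; this also works. (Minor point: your claim that the isomorphism type of $H$ is \emph{determined by} the $\GL_2(\QQ)$-orbit of the quadric is false---for instance $\QQ[x,y]/(xy,\,x^3-2y^3)\not\cong\QQ[x,y]/(xy,\,x^3-y^3)$ because $2$ is not a rational cube---but you only ever use the other direction, so nothing breaks.)

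On (iii) there is essentially no proof in the paper to compare with: it is disposed of in one line by citing \cite[Theorem 1.3]{herr}, cf.\ \cite[Theorem 1.3]{AB}. And what you present as the ``main obstacle'' is in fact a counterexample to the statement itself, not to a proof strategy. The flag manifold $\SU(3)/\TT^2=\PP(T_{\CP^2})$ is a closed, simply connected, smooth projective (hence K\"ahler, formal, rationally elliptic) $6$-manifold; its cohomology $\QQ[x,y]/(x^2+xy+y^2,\,x^2y+xy^2)$ is a Poincar\'e duality algebra generated in degree $2$ with Poincar\'e polynomial $(1+t^2)(1+t^2+t^4)$; and it is isomorphic to neither $H^*(\CP^1\times\CP^2,\QQ)$ nor $H^*(\CP^3\#\CP^3,\QQ)$, because its quadric is anisotropic over $\QQ$ (discriminant $-3$) while $x^2$ and $xy$ have nontrivial rational zeros, and isotropy of the canonical quadric is an isomorphism invariant. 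So Proposition \ref{propo}(iii) as stated is false and no proof of it can exist; since $\SU(3)/\TT^2$ is rationally elliptic and K\"ahler, any correct classification of the kind cited must list its rational homotopy type, which the proposition omits. The only flaw in your write-up here is the framing: you suggest the anisotropic cases could be ``ruled out'' by ``invoking the ambient toric hypothesis'', but the proposition carries no toric hypothesis, so there is nothing to invoke---your example refutes the statement rather than leaving a gap to be filled.

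Note also that the error is localized as far as the rest of the paper is concerned: in the only application, Theorem \ref{tres}(2), the algebras that actually occur are $\QQ[x,y]/(x^3,\,y(y+cx))$, whose quadric $y(y+cx)$ is split for $c\neq 0$ and degenerate for $c=0$; the substitution $u=y$, $v=y+cx$ identifies the algebra with $\QQ[u,v]/(uv,\,u^3-v^3)$, respectively with $\QQ[x,y]/(x^3,y^2)$, and intrinsic formality then yields the asserted rational homotopy equivalences with $\CP^3\#\CP^3$, respectively $\CP^1\times\CP^2$, without appeal to any classification of manifolds. The correct repair of Proposition \ref{propo}(iii) is therefore either to add $\SU(3)/\TT^2$ (and any further realizable types, e.g.\ split ones with non-cube ratio) to the list, or to restrict the statement to the algebras arising in Theorem \ref{tres}.
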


\begin{proof} \noindent (i) Choose a free presentation of $H$ which is necessarily of the form
$H=\QQ[x,y]/I$, for some ideal $I$. In view of the given Poincar\'e polynomial, we conclude
that $I$ must contain only one quadratic polynomial $p$ and only one cubic polynomial $q$ not
generated by $p$. We will show that $I=(p,q)$, from which it follows that $H$ is a compete intersection.

We complexify $H_\CC=H\otimes_\QQ \CC =\CC[x,y]/I$. It is enough to show that $I=(p,q)$ in
$\CC[x,y]$. Factor $p=p_1p_2$, $q=q_1q_2q_3$, and arrange variables (by a linear change of
variables) so that $p_1=x$. If $p_1=p_2$ then, unless some $q_i=x$, all the monomials
$x^4,x^3y,x^2y^2, xy^3,y^4$ are contained in $I$ and thus $I=(p,q)$. If $p_1\not=p_2$ then we may
assume $p_2=y$ and again, unless one of the $q_i$ equals either $x$ or $y$, all the monomials
of degree $4$ are in $I$, which is necessarily generated by $p,q$.

We finish the proof of (i) by showing that none of the $q_i$ can match $x$ or $y$. By
contradiction, we assume without losing generality that $q_1=p_1=x$. As $H_\CC$ is Poincar\'e
duality, let $r$ be a quadratic polynomial so that $q_1r\notin I$. If either $q_2$ or $q_3$
coincides with $p_2$, then $q$ is a multiple of $p$ which contradicts our hypothesis. If both
$q_2$ and $q_3$ are different from $p_2$, then any quadratic polynomial, in particular $r$, is
generated by $p_2,q_2q_3$. Therefore, $q_1r$ is in the ideal generated by $p_1p_2,q_1q_2q_3$
which is $I$ and we again reach a contradiction.

\medskip \noindent (ii) Complete intersection algebras $H$ with $\dim H<8$ are classified in
\cite[Theorem 1.2]{mishi}. An inspection shows that any such algebra as in the statement, i.e.,
generated in degree $2$ and with the prescribed Poincar\'e polynomial, is necessarily of the
form $$ \QQ[x,y]/(x^2+\lambda y^2,\mu x^3+\gamma x^2y), $$ where the rational numbers
$\lambda,\mu,\gamma$ run through a precise countable set. As any of this algebras is
intrinsically formal, it is the cohomology algebra of exactly one elliptic space, up to
rational homotopy.

\medskip \noindent (iii) By \cite[Theorem 1.3]{herr}, cf.\ \cite[Theorem 1.3]{AB},
the rational homotopy types in (ii) which can be realized by a manifold are $\CP^1\times \CP^2$ and $\CP^3\#\CP^3$.
\end{proof}

\section{Elliptic toric varieties of dimension $2$ and $3$}

 There are various
classifications of toric varieties in terms of their describing fans. In the general case,
a $d$-dimensional toric variety with second Betti number $b_2$ is given by a polytope in $\RR^d$ with $d+b_2$ spanning
vertices. As $b_2$ can be arbitrarily large for toric varieties, there are infinitely many isomorphism classes
of toric varieties of given dimension $d$. Among those, algebraic geometers have paid special attention on classifying {\em Fano} toric varieties, those whose anti-canonical line bundle is ample. Apart
from other geometric considerations, the list of these varieties is finite as the Fano property produces a bound of $b_2$.
Precisely, there are
$5, 18, 124, 866$ isomorphism classes of Fano $d$-dimensional toric varieties for $d=2,3,4,5$ (see \cite{Adv}).

However, from the topological point of view, restricting to the Fano property is most unnatural. Nevertheless, the bound (\ref{eqn:b2}) on $b_2$ will let us, in particular,
attack the classification of elliptic toric varieties in low dimensions.

Notice first that $\CP^1$ is the only toric variety of dimension $1$. In dimensions $2$ and $3$ we have:

\begin{theorem}\label{dos} Any elliptic smooth toric variety of dimension $2$ is either $\CP^2$ or a Hirzebruch surface $\PP\bigl(\cO_{\CP^1}\oplus \cO_{\CP^1}(b)\bigr)$ of invariant $b$.
\end{theorem}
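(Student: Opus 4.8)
The plan is to reduce the statement to a finite combinatorial classification of smooth complete fans in $\ZZ^2$ and then to read off the two families. A smooth compact toric surface is the variety $X(\Delta)$ of a complete smooth fan $\Delta$ in the lattice $\ZZ^2$; its maximal ($2$-dimensional) cones are spanned by consecutive pairs of the primitive ray generators $v_1,\dots,v_d$, taken in positively oriented cyclic order around the origin, so that the smoothness criterion reads $\det(v_i,v_{i+1})=1$ for all $i$ (indices modulo $d$), while completeness of a fan in the plane forces $d\ge 3$. The $d$ rays are precisely the irreducible $T$-divisors, so $b_2(X)=d-2$. First I would invoke Theorem \ref{main}: if $X$ is elliptic then $H^*(X,\QQ)$ is a complete intersection with $b_2=n$ generators in degree $2$, and the inequality (\ref{eqn:b2}) gives $n\le N=2$. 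Hence $b_2\le 2$, i.e.\ $d\in\{3,4\}$, and the classification splits into two finite cases.

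For $d=3$, I would normalize by a $\GL(2,\ZZ)$ change of basis so that $v_1=(1,0)$ and $v_2=(0,1)$, which is legitimate because $\det(v_1,v_2)=1$. The remaining smoothness relations $\det(v_2,v_3)=\det(v_3,v_1)=1$, together with the requirement that the three rays surround the origin, then pin down $v_3=(-1,-1)$ with no free parameter, and the resulting fan is exactly that of $\CP^2$. This disposes of the case $b_2=1$.

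For $d=4$, I would again set $v_1=(1,0)$ and $v_2=(0,1)$, and write $v_3=(a,b)$, $v_4=(c,d)$. Imposing the three remaining smoothness conditions $\det(v_2,v_3)=\det(v_3,v_4)=\det(v_4,v_1)=1$ yields $a=-1$, $d=-1$ and the single relation $bc=0$. The branch $c=0$ gives $v_4=(0,-1)$ and $v_3=(-1,b)$, which is the standard fan of the Hirzebruch surface $\PP\bigl(\cO_{\CP^1}\oplus\cO_{\CP^1}(b)\bigr)$ of invariant $b$; the branch $b=0$ gives, after a relabeling of the cyclically ordered rays, the same family with invariant $c$. In either case $X$ is a Hirzebruch surface, which completes the classification.

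I expect the main obstacle to lie in the bookkeeping of the $d=4$ case: one must keep track of the angular (cyclic) ordering of the four rays so that the determinant conditions are applied to the correct consecutive pairs, and then verify that both branches of $bc=0$ produce genuine complete fans (the four cones covering $\RR^2$) giving Hirzebruch surfaces rather than degenerate configurations, as well as match the integer $b$ with the bundle invariant. These are routine once the normalization is fixed. Finally, I would note that only the forward implication is needed here, since the converse—that $\CP^2$ and every Hirzebruch surface are elliptic—has already been recorded in Example \ref{example}.
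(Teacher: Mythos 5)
Your proposal is correct and follows essentially the same route as the paper: bound $b_2\le 2$ via Theorem \ref{main} and the inequality (\ref{eqn:b2}), normalize $v_1=(1,0)$, $v_2=(0,1)$ by a $\GL(2,\ZZ)$ change of basis, and solve the smoothness determinant conditions to get $v_3=(-1,-1)$ (giving $\CP^2$) when $d=3$, and the one-parameter family with $bc=0$ (giving Hirzebruch surfaces, with the $b=0$ branch reduced to the other by swapping coordinates) when $d=4$. The only difference is cosmetic: the paper additionally records the cohomology ring of $X_b$, which is not needed for the classification itself.
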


\begin{proof} Let $X$ be a $2$-dimensional, elliptic toric variety and keep in mind that $b_2(X)=b_2\le 2$ and that the
generating fan is necessary complete as $X$ is smooth.
For $b_2=1$, let $v_1,v_2,v_3$ be vectors in $\RR^2$ spanning the generating fan of $X$, which we label in
the cyclic order.
We can arrange coordinates so that $v_1\,=\,(1,\,0)$ and $v_2\,=\,(0,\,1)$, since any two vectors of these vectors constitute a basis of $\ZZ^2$
by smoothness of $X$. Again by smoothness,
$\det(v_3,\,v_1)\,=\,\det (v_2,\,v_3)\,=\,1$ and therefore $v_3\,=\,(-1,\,-1)$. Hence $X\,=\,\CP^2$ which is elliptic.

For $b_2\,=\,2$, let the fan of $X$ be spanned by four vectors in $\RR^2$, $v_1,v_2,v_3,v_4$ that we write in
cyclic order. Therefore we may consider $v_1\,=\,(1,\,0),\, v_2\,=\,(0,\,1)$ and
$\det(v_2,\,v_3)\,=\,\det (v_3,\,v_4)\,=\,\det(v_4,\,v_1)\,=\,1$,
again by the smoothness of $X$. The defining matrix is given by
 $$
\left(\begin{array}{cccc} 1 & 0 & -1 & a \\
 0 & 1 & b & -1 \end{array}\right),
 $$
with $ab\,=\,0$, $a,b\,\in\, \ZZ$. For $a\,=\,0$ the matrix is
 $$
\left(\begin{array}{cccc} 1 & 0 & -1 & 0 \\
 0 & 1 & b & -1 \end{array}\right),
 $$
which corresponds to the Hirzebruch
surface $X_b=\PP(\cO_{\CP^1}\oplus \cO_{\CP^1}(b))$ of invariant $b$. Topologically, it is the fiber bundle over $\CP^1$ whose fiber is $\CP^1$ and
the Chern class is $c_1=b$. This is always rationally elliptic as, in general, the total space of a fibration in which the base and the fiber are elliptic, is also elliptic. For $b=0$, $a\neq 0$, we can swap the
coordinates (and the order of $v_1,v_2$ and of $v_3,v_4$) to go back to the previous case.

Observe that, with the notation in (\ref{ecuacion}),
$$
H^*(X_b,\,\QQ)\,=\, \QQ[D_1,D_2,D_3,D_4]/I\, ,
 $$
 where
 $$
 I\,=\,(D_1D_3, D_2D_4, D_1-D_3,D_2+bD_3-D_4)\, .
 $$
Setting $D_1\,=\,x,\, D_2\,=\,y, \,D_3\,=\,x,\, D_4\,=\,y+bx$ we obtain that the cohomology is:
 $$
H^*(X_b,\,\QQ)\,=\, \QQ[x,y]/(x^2, y(y+bx))\, .
 $$
Here $x$ is the class of the fiber, and $y$ is the class $\cO_X(1)$.
\end{proof}

Recall that there is a diffeomorphism $X_b\cong \CP^1\x \CP^1$ for $b$ even, and $X_b \cong \CP^2\# \overline{\CP^2}$
for $b$ odd. Hence there are three smooth manifolds (and three rational homotopy types) corresponding to the
varieties in Theorem \ref{dos}, namely: $\CP^2$, $\CP^1\x \CP^1$ and $\CP^2\# \overline{\CP^2}$.

\begin{theorem}\label{tres} Let $X$ be an elliptic smooth toric variety of dimension $3$.
 \begin{enumerate}
\item If $b_2\,=\,1$, then $X\,=\,\CP^3$.
\item If $b_2\,=\,2$, then $X$ is either $\PP(\cO_{\CP^2}\oplus \cO_{\CP^2}(c))$ or $\PP(\cO_{\CP^1}\oplus \cO_{\CP^1}(a) \oplus \cO_{\CP^1}(b))$ and thus, it has the rational homotopy type of $\CP^3\#\CP^3$ or $\CP^1\times \CP^2$ respectively.
\item If $b_2\,=\,3$, then $X$ is a $\CP^1$-bundle over a Hirzebruch surface and has the rational homotopy
type of a
quotient $(S^3\times S^3\times S^3)/ T^3$.
\end{enumerate}
\end{theorem}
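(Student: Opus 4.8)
The plan is to argue case by case according to the three possible values $b_2\in\{1,2,3\}$ allowed by (\ref{eqn:b2}), just as in the proof of Theorem \ref{dos}. Throughout, $X$ is elliptic, so by Theorem \ref{main} its cohomology is a complete intersection concentrated in even degrees; its generating fan is complete (by compactness) and smooth, hence consists of $d=3+b_2$ rays $v_1,\dots,v_d$ subject to the unimodularity condition $|\det(v_i,v_j,v_k)|=1$ on every maximal cone. The case $b_2=1$ is the exact three-dimensional analogue of the $\CP^2$ computation: there are four rays, one normalises $v_1=(1,0,0),\,v_2=(0,1,0),\,v_3=(0,0,1)$ (any three spanning rays form a $\ZZ^3$-basis by smoothness), and then completeness together with the unimodularity of the three remaining maximal cones forces $v_4=(-1,-1,-1)$, so $X=\CP^3$.

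For $b_2=2$ I would invoke Kleinschmidt's classification of smooth complete toric varieties of Picard number two: each is the projectivisation of a sum of line bundles over a projective space. In dimension three this produces exactly the two families $\PP(\cO_{\CP^2}\oplus\cO_{\CP^2}(c))$ and $\PP(\cO_{\CP^1}\oplus\cO_{\CP^1}(a)\oplus\cO_{\CP^1}(b))$. Both are fibrations with elliptic base and elliptic fibre, hence elliptic by Example \ref{example}(4), and both have Poincar\'e polynomial $(1+t^2)(1+t^2+t^4)$; Proposition \ref{propo}(iii) then tells us the rational homotopy type is $\CP^1\times\CP^2$ or $\CP^3\#\CP^3$. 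To decide which in each case I would compute the ring from the projective-bundle relation $\sum_i c_i(E)\,\xi^{\,r-i}=0$. The $\CP^1$-bundle gives $\QQ[u,\xi]/(u^3,\,\xi(\xi+cu))$, which for $c\neq0$ is carried by $x=\xi,\ y=\xi+cu$ to $\QQ[x,y]/(xy,\,x^3-y^3)=H^*(\CP^3\#\CP^3)$; the $\CP^2$-bundle gives $\QQ[s,\xi]/(s^2,\,\xi^2(\xi+(a+b)s))$, which after the substitution $\eta=\xi+\tfrac{a+b}{3}\,s$ becomes $\QQ[s,\eta]/(s^2,\eta^3)=H^*(\CP^1\times\CP^2)$, confirming the two rational homotopy types in the stated order.

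For $b_2=3$ the rational homotopy type is forced before any fan analysis. Since now $b_2=n=N=3$, the identity $\dim_\CC X=\sum_{i=1}^n(\beta_i-1)=3$ with three summands $\beta_i-1\ge1$ gives $\beta_i=2$ for all $i$; hence $P_X(t)=(1+t^2)^3$ and the minimal model is the pure Sullivan algebra with even generators $x_1,x_2,x_3$ in degree $2$, odd generators $y_1,y_2,y_3$ in degree $3$, and $dy_i=q_i(x_1,x_2,x_3)$ a regular sequence of three quadrics. This is precisely the model of an almost free linear $T^3$-action on $S^3\times S^3\times S^3$, the weights being read off from the $q_i$, so $X$ has the rational homotopy type of $(S^3\times S^3\times S^3)/T^3$, in agreement with the general picture for elliptic torus manifolds recalled in the introduction. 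It remains to pin down $X$ up to isomorphism, for which I would return to the fan, now with six rays. Eliminating three of the $D_i$ by means of the linear relations (ii) in (\ref{ecuacion}), the complete-intersection hypothesis becomes the assertion that the reduced Stanley--Reisner ideal is generated by exactly three quadrics forming a regular sequence.

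The final step, which I expect to be the main obstacle, is this combinatorial classification: among all smooth complete fans on six rays in $\RR^3$, single out those whose cohomology is a complete intersection. The subtlety is that the Poincar\'e polynomial $(1+t^2)^3$ does not detect ellipticity — the double blow-up of $\CP^3$ in Example \ref{example}(6) shares it yet is hyperbolic — so one must use the full ring structure, equivalently that the primitive relations genuinely form a regular sequence. Phrased through Batyrev's description of Picard-number-three smooth complete toric varieties, this discards the configurations with five primitive collections (the blow-up type, as in Example \ref{example}(6)) and retains only those with three pairwise disjoint primitive collections, which organise $X$ as an iterated $\CP^1$-bundle, that is, a $\CP^1$-bundle over a Hirzebruch surface. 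Tracking the fibre classes through this tower recovers the generators $x_i$ and the quadrics $q_i$, closing the identification with $(S^3\times S^3\times S^3)/T^3$.
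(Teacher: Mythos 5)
Your handling of $b_2=1$ coincides with the paper's, and for $b_2=2$ your appeal to Kleinschmidt's classification of smooth complete toric varieties of Picard number two is a legitimate shortcut past the paper's explicit fan enumeration (the paper follows Watanabe--Watanabe instead); your two ring computations and the use of Proposition \ref{propo}(iii) then identify the rational homotopy types correctly. The trouble is in $b_2=3$, where the theorem's real content lies. Already your opening claim there --- that the rational homotopy type is ``forced before any fan analysis'' --- is not justified: a pure model with $dy_i=q_i$ a regular sequence of three quadrics is the model of a \emph{linear} $T^3$-quotient of $S^3\times S^3\times S^3$ only when each $q_i$ splits as a product of two linear forms with rational coefficients, since those factors are the weights you propose to ``read off.'' An abstract regular sequence of quadrics need not split this way (e.g.\ $x_1^2+x_2^2,\ x_2^2+x_3^2,\ x_3^2+x_1^2$ is a regular sequence with Poincar\'e polynomial $(1+t^2)^3$ whose members are irreducible over $\QQ$); the splitting comes from the Stanley--Reisner presentation, i.e.\ from the fan. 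The paper's order of argument reflects this: it first derives $H^*(X,\QQ)=\QQ[x,y,z]/(x^2,\,y(y-bx),\,z(z-ax-dy))$ from the Case (I) fans, and only then invokes \cite[Proposition 4.26]{de} to identify the quotient.

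The more serious gap is that the exclusion step is asserted, not proved. Granting Batyrev's dichotomy for Picard number three (three pairwise disjoint primitive collections versus five primitive collections --- and note that to quote Batyrev you also need to know these varieties are projective, e.g.\ by Kleinschmidt--Sturmfels), the theorem requires showing that \emph{no} variety with five primitive collections is elliptic, i.e.\ that none of their cohomology rings is a complete intersection. You write that the complete-intersection condition ``discards the configurations with five primitive collections,'' but give no argument, and as you yourself observe, the Poincar\'e polynomial cannot do the job: those varieties also have $P_X(t)=(1+t^2)^3$, exactly like Example \ref{example}(6). In the paper this exclusion is Case (II) of the proof: six explicit families of rings are computed, and for each one checks by hand that the ideal requires three quadric generators and then still a cubic one (for instance $xz^2\notin(x^2+xy,\,y^2+xy,\,y^2+yz)$ in the first family), so it is not a complete intersection and the variety is hyperbolic. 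Without carrying out this verification --- which your proposal flags as ``the main obstacle'' and then skips --- the implication ``elliptic $\Rightarrow$ $\CP^1$-bundle over a Hirzebruch surface'' remains unproven.
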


\begin{proof} We follow the notations and
descriptions of \cite{WW}. For computing the cohomology of the resulting varieties we use repeatedly its presentation in (\ref{ecuacion}). Again, in view of the inequality (\ref{eqn:b2}), $b_2\leq 3$.

\medskip \noindent (1) For $b_2=1$, the generating fan is spanned by four vectors which can always be chosen as
$v_1=(1,0,0), v_2=(0,1,0), v_3=(0,0,1),v_4=(-1,-1,-1)$. This produces $X=\CP^3$.

\medskip \noindent (2) For $b_2=2$, choose the triangulation of the fan in \cite[page 41]{WW} with (oriented) cones $(v_1,v_2,v_3)$,
 $(v_1,v_3,v_4)$, $(v_1,v_4,v_5)$, $(v_1,v_5,v_2)$, $(v_2,v_4,v_3)$, $(v_2,v_5,v_4)$. Recall that
$\det(v_i,v_j,v_k)=1$ for each cone, by smoothness of $X$. We can arrange that
$v_1=(1,0,0), v_2=(0,1,0), v_3=(0,0,1)$. This produces the matrix
 $$
\left(\begin{array}{ccccc} 1 & 0 & 0 & -1 & a \\
 0 & 1 & 0 & -1 & b \\
 0 & 0 & 1 & c & -1 \end{array}\right),
 $$
where $ac=bc=0$, $a,b,c\in \ZZ$. Therefore there are two possibilities:
\begin{equation}\label{eqn:myfan}
\left(\begin{array}{ccccc} 1 & 0 & 0 & -1 & 0 \\
 0 & 1 & 0 & -1 & 0 \\
 0 & 0 & 1 & c & -1 \end{array}\right), \qquad
\left(\begin{array}{ccccc} 1 & 0 & 0 & -1 & a \\
 0 & 1 & 0 & -1 & b \\
 0 & 0 & 1 & 0 & -1 \end{array}\right).
 \end{equation}

The first fan in (\ref{eqn:myfan})
produces $X\,=\,\PP(\cO_{\CP^2}\oplus \cO_{\CP^2}(c))$ which is a $\CP^1$-bundle over
$\CP^2$, always rationally elliptic with Poincar\'e polynomial $(1+t^2)(1+t^2+t^4)$.
A computation similar to the one in the proof of Theorem \ref{dos} shows that
 $$
H^*(X,\,\QQ)\,=\, \QQ[x,y]/(x^3,y(y+cx))\, .
$$
Thus, as a smooth manifold, using Proposition \ref{propo}, $X$ has the rational homotopy type of
(and hence it is diffeomorphic to) $\CP^3\#\CP^3$ if $c\,\not=\,0$, and that of $\CP^1\x\CP^2$ if $c=0$.

The second fan in (\ref{eqn:myfan}) corresponds to
$X\,=\,\PP(\cO_{\CP^1}\oplus \cO_{\CP^1}(a) \oplus \cO_{\CP^1}(b))$,
which is a $\CP^2$-bundle over $\CP^1$ and again elliptic. We also obtain that
$$
H^*(X,\,\QQ)\,=\,\QQ[x,y]/(x^2,y(y+ax)(y+bx))
 =\QQ[x,y]/(x^2,y^3-\lambda x y^2 ),
 $$
for suitable $\lambda$, and therefore $X$ has always the rational homotopy type of (and
hence it is diffeomorphic to) $\CP^2\x \CP^1$.

\medskip \noindent (3) If $b_2=3$, by \cite[page 42]{WW}, there are two possible triangulations of the generating fan of $X$:

\medskip \noindent Case (I). The (oriented) cones are $(v_1,v_2,v_3)$,
$(v_2,v_6,v_3)$, $(v_1,v_5,v_2)$, $(v_2,v_5,v_6)$, $(v_1,v_4,v_5)$, $(v_1,v_3,v_4)$,
$(v_3,v_6,v_4)$, $(v_4,v_6,v_5)$. This produces the matrix
 $$
\left(\begin{array}{cccccc} 1 & 0 & 0 & a & d & -1 \\
 0 & 1 & 0 & -1 & c & f \\
 0 & 0 & 1 & b & -1 & e \end{array}\right),
 $$
where $bc=de=af=0$ and $ace=-dfb$, $a,b,c,d,e,f\in \ZZ$. There are six cases:
$a=c=d=0$, $a=b=e=0$, $c=e=f=0$, $a=b=d=0$, $c=d=f=0$, $b=e=f=0$.
In all cases, there is at least one column with $0$ in two of the entries. We reorder variables so that this happens in the
last position, that is, $v_6=(-1,0,0)$. We also reorder the first and second coordinates
so that $c=0$. This gives the
 $$
\left(\begin{array}{cccccc} 1 & 0 & 0 & a & d & -1 \\
 0 & 1 & 0 & -1 & 0 & 0 \\
 0 & 0 & 1 & b & -1 & 0 \end{array}\right).
 $$

A computation shows that
$$
H^*(X,\,\QQ)\,=\, \QQ[D_1,D_2,D_3,D_4,D_5,D_6]/I\, .
 $$
where
$$
I=(D_2D_4, D_3D_5, D_1D_6,D_1-aD_4+dD_5-D_6,D_2-D_4,D_3+bD_4-D_5).
$$
 Choosing $D_4=D_2=x$, $D_5=y$, $D_3=y-bx$, $D_6=z$ and $D_1=z-ax-dy$, we obtain
$$
H^*(X,\,\QQ)\,= \,\QQ[x,y,z]/(x^2, y(y-bx), z(z-ax-dy))\, ,
$$
which is the cohomology algebra of a $\CP^1$-bundle over the Hirzebruch surface
$\PP(\cO_{\CP^1}\oplus \cO_{\CP^1}(b))$, associated to the Chern class $ax+dy$
(where $x$ is the class of the fiber and $y$ is the class of the section).
In particular, $X$ is rationally elliptic. Moreover, being intrinsically formal,
it has the rational homotopy type of the
quotient $(S^3\times S^3\times S^3)/ T^3$, via the action
 $$
 (u,v,w)\cdot \bigl((p_1,p_2),(q_1,q_2),(r_1,r_2)\bigr)\,=\,\bigl((up_1,up_2),(uq_1,u^bvq_2),(ur_1,u^av^dwr_2)\bigr),
 $$
in which $(u,v,w)\,\in\, T^3$ and $\bigl((p_1,p_2),(q_1,q_2),(r_1,r_2)\bigr)\in S^3\times S^3\times S^3\subset\CC^2\times\CC^2\times \CC^2$.
Indeed, see \cite[Proposition 4.26]{de}, such manifold has precisely the above rational cohomology algebra.

\medskip \noindent Case (II). The (oriented) cones are $(v_1,v_2,v_3)$,
$(v_1,v_3,v_4)$, $(v_3,v_5,v_4)$, $(v_1,v_4,v_6)$, $(v_1,v_6,v_2)$, $(v_4,v_5,v_6)$,
$(v_2,v_6,v_3)$, $(v_3,v_6,v_5)$. The solutions are given by:
{\scriptsize $$
{\scriptstyle \left(\begin{array}{cccccc} 1 & 0 & 0 & a & a-1 & -1 \\
 0 & 1 & 0 & -1 & -1 & 0 \\
 0 & 0 & 1 & b & b & -1 \end{array}\right),
\left(\begin{array}{cccccc} 1 & 0 & 0 & 1 & a & -1 \\
 0 & 1 & 0 & -1 & -a-1 & 1 \\
 0 & 0 & 1 & 0 & b &-1 \end{array}\right),
\left(\begin{array}{cccccc} 1 & 0 & 0 & a & -1 & -1 \\
 0 & 1 & 0 & -1 & 0 & 1 \\
 0 & 0 & 1 & 0 & 0 &-1 \end{array}\right)
,}
$$}
{\scriptsize $$
{\scriptstyle \left(\begin{array}{cccccc} 1 & 0 & 0 & 1 & 0 & -1 \\
 0 & 1 & 0 & -1 & -1 & a \\
 0 & 0 & 1 & 0 & 0 & -1 \end{array}\right),
\left(\begin{array}{cccccc} 1 & 0 & 0 & 0 & -1 & -1 \\
 0 & 1 & 0 & -1 & a-1 & a \\
 0 & 0 & 1 & 0 & 0 &-1 \end{array}\right),
\left(\begin{array}{cccccc} 1 & 0 & 0 & n & m & -1 \\
 0 & 1 & 0 & -1 & -1-mp & p \\
 0 & 0 & 1 & 0 & 0 &-1 \end{array}\right)
,}
$$}

\noindent Here, the first five families have parameters $a,b\in \ZZ$, and the sixth matrix
corresponds to $5$ isolated cases, with parameters $(n,m,p)=(-1,1,-3)$, $(-2,1,-2)$, $(-1,2,-2)$, $(-3,2,-1)$, $(-2,3,-1)$,
which are solutions to $n(1+mp)-m=1$ not appearing in the previous families.

The cohomology of is given this time by
 $$
 \QQ[D_1,D_2,D_3,D_4,D_5,D_6]/I.
 $$
where the common non linear generators of $I$ are,
$$
D_1D_5, D_2D_4, D_2D_5, D_3D_4D_6, D_1D_3D_6
$$
and the linear ones are immediately obtained in view of the corresponding matrix.
Now we take $x=D_4, y=D_5, z=D_6$, to get that the cohomology rings in every one of the six cases is:
\begin{align*}
&\QQ[x,y,z]/(x^2+xy,y^2+xy,y^2+yz,xz^2, (1-a-b)y^3+z^3),\\
&\QQ[x,y,z]/(xy-yz, (x+(a+1)y-z)x, y^2, (x-by)z^2,(z-(a+b)y)z^2), \\
&\QQ[x,y,z]/((1-a)xy+y^2, x^2-xz, xy-yz, x^3, yz^2+z^3), \\
&\QQ[x,y,z]/(xy-yz,x^2+xy -axz, (1-a)xy+y^2, xz^2, z^3),\\
&\QQ[x,y,z]/(y^2+yz, x^2-axz +(1-a) xy, xy+y^2, xz^2, yz^2+z^3),\\
&\QQ[x,y,z]/((n+mnp-m)xy-yz,\\
&\qquad\qquad\qquad (x+(1+mp)y-pz)x,(1-np)xy+y^2, xz^2, z^3-myz^2).
\end{align*}
We now check that none of
the quotient ideals in this list is generated by a regular sequence and thus,
the corresponding toric variety is always hyperbolic. As the argument we follow is similar for all of them we only do the first one. As a vector space, the algebra is generated by
 $$
 1, x, y,z , xy, xz, z^2, x^2y, xyz, y^3, x^2yz ,
 $$
which gives the Poincar\'e polynomial $P_X(t)=1+3t^2+3t^4+t^6=(1+t^2)^3$. Hence,
for the ideal of relations $I$, we need at least three relations of degree $4$ (quadratic on $x,y,z$).
But then $xz^2 \not\in (x^2+xy,y^2+xy,y^2+yz)$, so we need at least another relation of degree $6$
showing that the cohomology ring $\QQ[x,y,z]/I$ is not a complete intersection, and hence the toric variety
is hyperbolic.
\end{proof}

\begin{remark}\mbox{}
\begin{itemize}
\item[(i)] The above process can potentially be carried out for dimension $d=4$, although the number
of cases grows drastically. An alternative possibility is to restrict to finding which Fano toric varieties
of dimensions $d\,=\,4,\,5$ are elliptic, using the classifications in \cite{Adv}. Note that the Fano condition
implies that there are finitely many and a bound of $b_2$.

\item[(ii)] The toric varieties described in Case (II) of the proof of Theorem \ref{tres} constitute a large class of hyperbolic manifolds of very special nature.
\end{itemize}
\end{remark}

\end{document}